\documentclass[11pt,letterpaper,reqno]{amsart}
\usepackage{amssymb}
\usepackage{amsmath}
\usepackage{amsthm}
\usepackage{amsfonts}
\usepackage{bbm}
\usepackage{enumitem} 
\usepackage{booktabs}
\usepackage{graphicx}
\usepackage{xcolor}
\usepackage[T1]{fontenc}
\usepackage{doi}
\usepackage{comment} 
\usepackage{hyperref}
\hypersetup{pdfstartview={FitH}}
\usepackage{bookmark}
\usepackage[capitalize,noabbrev]{cleveref}

\newtheorem{theorem}{Theorem}[section]
\newtheorem{lemma}[theorem]{Lemma}

\newtheorem{corollary}[theorem]{Corollary}

\theoremstyle{definition}

\newtheorem{definition}[theorem]{Definition}

\newcommand\doublecheck{\textcolor{blue}{\checked\kern-0.6em\checked}}
\newcommand{\newvtheorem}[2]{\newtheorem{#1}[theorem]{\llap{\textnormal{\doublecheck} }#2}}
\newcommand{\newvtheoremstar}[2]{\newtheorem*{#1}{\llap{\textnormal{\doublecheck} }#2}}
\newvtheorem{vtheorem}{Theorem}
\newvtheorem{vlemma}{Lemma}
\newvtheoremstar{vtheoremstar}{Theorem}
\newvtheoremstar{vlemmastar}{Lemma}

\begin{document}

\title{Gaps in multiplicative Sidon sets II}

\author{Wouter van Doorn}
\address{Groningen, the Netherlands}
\email{wonterman1@hotmail.com}

\author{Quanyu Tang}
\address{School of Mathematics and Statistics, Xi'an Jiaotong University, Xi'an 710049, P. R. China}
\email{tangquanyu827@gmail.com}

\subjclass[2020]{11B75, 11B05, 11B83, 11N05, 11N36, 05D15, 68V20}

\keywords{multiplicative Sidon set, prime gaps}

\begin{abstract}
With $\rho = \frac{13-\sqrt{69}}{10} \approx 0.47$, it was recently established that there exist multiplicative Sidon sets (sets without any non-trivial solutions to $ab = cd$) in $\{1, 2, \ldots, n\}$ with maximal gap size $\ll_{\varepsilon} n^{\rho + \varepsilon}$. Here we improve upon this result and show that one can take $\rho = \frac{10}{33} \approx 0.303$ instead.
\end{abstract}

\maketitle

\section{Introduction}
Define a set $A$ of positive integers to be a multiplicative Sidon set if the equality of products $ab = cd$ with $a,b,c,d \in A$ implies the equality of sets $\{a, b\} = \{c, d\}$. In other words, all products of two elements from $A$ are essentially unique. Now, one can verify that the set of primes is a multiplicative Sidon set, and it is natural to ask if such a set can get much larger than that. Erd\H{o}s~\cite{Erdos1938, Erdos1968} almost completely answered this question, by showing that there exist absolute constants $c_1$ and $c_2$ such that for all $n \ge 2$ the largest possible multiplicative Sidon subset of $[n] := \{1, 2, \ldots, n\}$ has size between $\pi(n) + \frac{c_1 n^{3/4}}{(\log n)^{3/2}}$ and $\pi(n) + \frac{c_2 n^{3/4}}{(\log n)^{3/2}}$, where $\pi(n)$ is the number of primes in $[n]$. 

Since it is conjectured that, for every $\varepsilon > 0$, the difference between consecutive primes $p$ and $q$ is smaller than $p^{\varepsilon}$ if $p$ is large enough, the set of primes in $[n]$ conjecturally gives a multiplicative Sidon set in $[n]$ whose maximal gap between consecutive elements is $\ll_{\varepsilon} n^{\varepsilon}$. Unfortunately, proving such bounds on prime gaps is infamously intractable with current knowledge, and even under the assumption of the Riemann Hypothesis the best one can show is a gap size of $\ll_{\varepsilon} n^{1/2 + \varepsilon}$. 

This lack of understanding of prime gaps prompted S\'ark\"ozy~\cite{Sarkozy2001} to ask whether one can show the existence of a multiplicative Sidon subset of $[n]$ with gaps at most $\sqrt{n}$, which was recently answered in the affirmative by the authors and Monticone~\cite{DMT2026}. More precisely, let us define $g(n)$ as the infimum of all real numbers $L$ such that there exists a multiplicative Sidon subset of $[n]$ which has non-empty intersection with every sub-interval $[x, x+L] \subseteq [1, n]$ of length $L$. In \cite{DMT2026} it was then shown that for every $\varepsilon > 0$ we have $g(n) \ll_{\varepsilon} n^{ \frac{13-\sqrt{69}}{10} + \varepsilon}$. 

In this paper we further improve the best known upper bound on $g(n)$, by combining the probabilistic method with recent results of Gafni and Tao~\cite{GafniTao2025} on the sparsity of intervals with exceptionally few primes. This in particular lowers the upper bound to $g(n) \ll_{\varepsilon} n^{10/33 + \varepsilon}$, and moreover shows how any further improvements to results similar to those in~\cite{GafniTao2025} immediately feed into improved bounds on $g(n)$.

\subsection*{Declaration of AI usage}
The proof of the upper bound $g(n) \ll_{\varepsilon} n^{10/33 + \varepsilon}$ was found by ChatGPT 5.5 Pro, and it deserves credit for most of the main ideas that we present. The proof was subsequently simplified and generalized by the authors, and the end result is completely human-written. For full transparency, the entire paper that ChatGPT initially produced is publicly available; see~\cite{GPT}. The same GitHub repository also contains fully self-contained Lean formalizations of Corollary \ref{firstbound} and Theorem \ref{main} that we prove in Sections \ref{sec:lll} and \ref{sec:many_primes_imply_small_gaps} respectively. The formalizations of these results were obtained by the automated theorem proving tool Aristotle from Harmonic~\cite{ari}.

\section{An application of the Lov\'asz local lemma}\label{sec:lll}

It is interesting to note that~\cite{DMT2026, Erdos1938, Erdos1968} all make use of graph-theoretic language and results in order to prove bounds on multiplicative Sidon sets, which are themselves defined as a purely number-theoretical object. Here we follow this tradition. 

Recall that, if $E$ is a vertex in a graph $G$, then $\Gamma_G(E)$ denotes the neighborhood of $E$, i.e. the set of all vertices in $G$ that $E$ is connected to. With this in mind, we then define the following probabilistic notion.

\begin{definition}
If $\mathcal{E}$ is a set of events in a probability space, then a graph $G$ is called a \emph{dependency graph} on $\mathcal{E}$, if $G$ has the events as vertices, and every event $E \in \mathcal{E}$ is mutually independent of $\mathcal{E} \setminus \big(\{E\} \cup \Gamma_G(E)\big)$. 
\end{definition}

Using the above definition we can state the celebrated Lov\'asz local lemma~\cite{ErdosLovasz1975, Spencer1977} as follows, where $\Pr(E)$ denotes the probability of an event $E$.

\begin{lemma}[asymmetric Lov\'asz local lemma] \label{lll}
Let $\mathcal{E}$ be a finite set of `bad' events in a probability space, and let $G$ be a corresponding dependency graph on $\mathcal{E}$. If there exists a function $f : \mathcal{E} \to [0, 1)$ such that for all $E \in \mathcal{E}$ we have 

\begin{equation} \label{LLLeq}
\Pr(E) \le f(E) \prod_{E' \in \Gamma_G(E)} \big(1 - f(E')\big),
\end{equation}

then the probability that no bad events happen is positive.
\end{lemma}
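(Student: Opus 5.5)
We prove the asymmetric local lemma by the standard induction on conditional probabilities. The central claim is the following. For every $E \in \mathcal{E}$ and every subset $S \subseteq \mathcal{E} \setminus \{E\}$ with $\Pr\bigl(\bigcap_{F \in S} \overline{F}\bigr) > 0$,
\[
\Pr\Bigl(E \,\Big|\, \bigcap_{F \in S} \overline{F}\Bigr) \le f(E).
\]
Once this is established, the conclusion follows from the chain rule. Enumerate $\mathcal{E} = \{E_1, \ldots, E_m\}$. Then
\[
\Pr\Bigl(\bigcap_{i=1}^m \overline{E_i}\Bigr) = \prod_{i=1}^m \Bigl(1 - \Pr\bigl(E_i \,\big|\, \textstyle\bigcap_{j<i} \overline{E_j}\bigr)\Bigr) \ge \prod_{i=1}^m \bigl(1 - f(E_i)\bigr) > 0,
\]
because $f$ takes values in $[0,1)$. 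Positivity of each partial intersection holds inductively, so every conditioning in the product is legitimate.

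The claim is proved by strong induction on $|S|$. Split $S$ into $S_1 = S \cap \Gamma_G(E)$ and $S_2 = S \setminus S_1$.

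\emph{Case $S_1 = \emptyset$.} Here $E$ is mutually independent of the events in $S_2$, so the conditional probability equals $\Pr(E)$. By \eqref{LLLeq}, $\Pr(E) \le f(E)$.

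\emph{Case $S_1 \neq \emptyset$.} Write $A = \bigcap_{F \in S_1} \overline{F}$ and $B = \bigcap_{F \in S_2} \overline{F}$. Then
\[
\Pr(E \mid A \cap B) = \frac{\Pr(E \cap A \mid B)}{\Pr(A \mid B)} \le \frac{\Pr(E \mid B)}{\Pr(A \mid B)}.
\]
The numerator equals $\Pr(E)$ by mutual independence of $E$ from $S_2$. For the denominator, list $S_1 = \{F_1, \ldots, F_k\}$ and expand
\[
\Pr(A \mid B) = \prod_{j=1}^k \Bigl(1 - \Pr\bigl(F_j \,\big|\, \overline{F_1} \cap \cdots \cap \overline{F_{j-1}} \cap B\bigr)\Bigr).
\]
Each conditioning set has size at most $|S| - 1$ and does not contain $F_j$, so the induction hypothesis bounds each term below by $1 - f(F_j)$. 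Since $S_1 \subseteq \Gamma_G(E)$ and every factor $1 - f(E')$ lies in $(0,1]$,
\[
\Pr(A \mid B) \ge \prod_{E' \in \Gamma_G(E)} \bigl(1 - f(E')\bigr).
\]
Combining this with \eqref{LLLeq} gives $\Pr(E \mid A \cap B) \le f(E)$.

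The main point of care is bookkeeping rather than any real obstacle. One must check that the induction hypothesis is applied only to strictly smaller sets not containing the event in question. One must also check that every conditioning event has positive probability. This follows because the partial products above are bounded below by products of factors $1 - f > 0$, and this lower bound can be carried inside the same induction.
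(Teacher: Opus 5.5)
The paper gives no proof of this lemma; it quotes it from Erd\H{o}s--Lov\'asz and Spencer. Your argument is correct and is the standard proof from that literature: the induction on $|S|$ showing $\Pr\bigl(E \mid \bigcap_{F\in S}\overline{F}\bigr)\le f(E)$, followed by the chain rule. One small simplification: positivity of the conditioning events in the inductive step needs no separate induction, since each of them contains $A\cap B$, which has positive probability by the hypothesis of your claim.
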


As it turns out, Lemma \ref{lll} can be applied to show the existence of certain multiplicative Sidon sets.

\begin{lemma} \label{randomMS}
For real numbers $\alpha$ and $\beta$ that satisfy $0 < \beta < 2\alpha$, let $\mathcal{S} = \{S_1, S_2, \ldots\}$ be a collection of disjoint subsets of $[n]$, with $|\mathcal{S}| \le n^{\beta}$ and $|S_i| \ge n^{\alpha}$ for all $i$. Then for all sufficiently large $n$ there exists a multiplicative Sidon set $C = \{c_1, c_2, \ldots\}$ with $c_i \in S_i$ for all $i$.
\end{lemma}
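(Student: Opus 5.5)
We choose each $c_i$ independently and uniformly at random from $S_i$, and apply Lemma \ref{lll} to a family of bad events indexed by concrete nontrivial solutions. Since the $S_i$ are disjoint, the chosen elements are automatically distinct, so $C$ fails to be a multiplicative Sidon set exactly when some equation $ab=cd$ with $\{a,b\}\neq\{c,d\}$ has all of $a,b,c,d$ chosen.

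First we set up the bad events. For every tuple $(a,b,c,d)$ of elements of $\bigcup S_i$ with $ab=cd$ and $\{a,b\}\neq\{c,d\}$, let $E$ be the event that all of $a,b,c,d$ are selected. Nontriviality forces at least three distinct values among $a,b,c,d$: if two values coincide across sides, say $a=c$, then $b=d$ and the solution is trivial. So the only possible coincidence is $a=b$ or $c=d$, giving the shapes $a^2=cd$ with three distinct values, or four distinct values. If two distinct values lie in the same $S_i$, then $\Pr(E)=0$ and we discard $E$. Otherwise the values lie in $3$ or $4$ distinct sets $S_{i}$, and
\[
\Pr(E)=\prod_{i \in I(E)} |S_i|^{-1},
\]
where $I(E)$ is the set of indices involved. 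Since $E$ is determined by the choices $c_i$, $i\in I(E)$, the graph joining $E,E'$ whenever $I(E)\cap I(E')\neq\emptyset$ is a dependency graph. We take $f(E)=2\Pr(E)\le 2n^{-3\alpha}<1$.

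The key estimate is that for each fixed index $i$,
\[
\Sigma_i:=\sum_{E' : i\in I(E')} f(E') \le n^{\beta-2\alpha+o(1)}.
\]
Granting this, each $E$ has $|I(E)|\le 4$, so $\sum_{E'\in\Gamma(E)} f(E')\le 4n^{\beta-2\alpha+o(1)}=o(1)$ because $\beta<2\alpha$. Hence
\[
\prod_{E'\in\Gamma(E)}(1-f(E'))\ge \exp\Big(-2\sum_{E' \in \Gamma(E)} f(E')\Big)\ge \tfrac12
\]
for large $n$, and then $f(E)\cdot\tfrac12=\Pr(E)$ verifies \eqref{LLLeq}.

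To prove the estimate, write $\Sigma_i$ as $\frac{1}{|S_i|}\sum_{x\in S_i}$ of the weighted count of solutions containing $x$ in a given position; there are $O(1)$ positions and shapes. Each weight carries a factor $\frac{1}{|S_i|}$ for $x$, so after averaging over $x\in S_i$ it suffices to bound, uniformly in $x$, the sum over solutions containing $x$ of $\prod_{j\in I\setminus\{i\}}|S_j|^{-1}$.

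Take the four-distinct shape with $x=a$, so $xb=cd$. Group the solutions by the index $j$ of the set containing $b$. For each $b\in S_j$, the pair $(c,d)$ is a factorization of $xb\le n^2$, so there are at most $d(xb)=n^{o(1)}$ choices, and each has weight at most $|S_j|^{-1}n^{-2\alpha}$. Summing over $b\in S_j$ cancels the $|S_j|^{-1}$, and summing over the at most $n^\beta$ indices $j$ gives $n^{\beta-2\alpha+o(1)}$.

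The three-value shape $a^2=cd$ is handled the same way:
\begin{itemize}
\item If $x=a$, the pair $(c,d)$ is one of $n^{o(1)}$ factorizations of $x^2$, with weight at most $n^{-2\alpha}$.
\item If $x=c$, group by the set $S_j$ containing $d$. For each $d\in S_j$ there is at most one $a$, namely $a=\sqrt{xd}$, so the contribution is at most $n^{\beta}\cdot n^{-\alpha}\le n^{\beta-2\alpha}\cdot n^{\alpha}$. This is too weak.
\end{itemize}
Let me redo that case with the weights used correctly. The weight is $|S_j|^{-1}|S_k|^{-1}$ with $a\in S_k$. Summing over $d\in S_j$ cancels $|S_j|^{-1}$, leaving $|S_k|^{-1}\le n^{-\alpha}$ per index $j$, hence $n^{\beta-\alpha}$ in total. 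This is not obviously small.

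I expect this to be the main obstacle. The fix is to group by the index $k$ of the set containing $a$ instead. For each $a\in S_k$, $d=a^2/x$ is determined, so the count is at most $|S_k|$. Cancelling $|S_k|^{-1}$ leaves a factor $|S_j|^{-1}\le n^{-\alpha}$ for each of the at most $n^\beta$ indices $k$, which is still $n^{\beta-\alpha}$. It must instead be combined with the outer average over $x\in S_i$. Summing over $x\in S_i$ and $a\in S_k$, the element $d=a^2/x$ is determined, so the total is at most
\[
\frac{1}{|S_i|}\sum_k \frac{|S_i|\,|S_k|}{|S_k|\,n^{\alpha}}\cdot\frac{1}{?}
\]
and the counting has to be organised so that two full factors $|S|^{-1}$ survive. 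The natural way is to first fix the pair of indices $(k,j)$ and bound the number of solutions $(a,x,d)\in S_k\times S_i\times S_j$ of $a^2=xd$ by $\min(|S_k|,|S_j|)\,n^{o(1)}$. Dividing by $|S_i||S_j||S_k|$ then leaves at most $n^{-2\alpha+o(1)}$ per pair, but there are $n^{2\beta}$ pairs, which is again too many.

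So this three-value case is where the real care is needed. I would first try to recover the bound $n^{\beta-2\alpha+o(1)}$ by also exploiting the divisor bound on $x d$, or, failing that, handle the squares case by a separate argument. The four-value case goes through as described above.
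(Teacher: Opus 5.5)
Your setup matches the paper's: independent uniform choices, events indexed by nontrivial solutions, a dependency graph via shared indices, $f=2\Pr$, and the reduction to $\Sigma_i\le n^{\beta-2\alpha+o(1)}$. Your treatment of the four-value shape and of the case $x=a$ in $a^2=cd$ is correct. But there is a genuine gap, which you acknowledge yourself: the case $a^2=xd$, where $x\in S_i$ is one of the non-repeated elements, is never bounded. Moreover, your plan of bounding the weighted count uniformly in $x$ and then averaging cannot work there. Take $\alpha=0.1$ and $\beta=0.16$, and let $S_i$ contain $1$. Split the integers $t\in[n^{0.2},n^{0.25}]$ into blocks of size $n^{0.1}$, and their squares $t^2$ into corresponding blocks. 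Then for $x=1$ the solutions $t^2=1\cdot t^2$ carry total weight about $n^{0.25}\cdot n^{-0.2}\to\infty$. Only the \emph{average} over $x\in S_i$ is small.

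The missing idea is to make the squared element the outermost variable; this is how the paper handles its type $ab=c^2$. Fix $k$ and $a\in S_k$. A solution $(x,d)$ with $x\in S_i$, $d\in S_j$, $xd=a^2$ is determined by the divisor $x$ of $a^2$. So there are at most $\tau(a^2)=n^{o(1)}$ such solutions \emph{in total over all $j$ simultaneously}, each of weight $|S_i|^{-1}|S_j|^{-1}\le n^{-2\alpha}$. Hence
\[
\sum_k \frac{1}{|S_k|}\sum_{a\in S_k}\ \sum_j\sum_{\substack{x\in S_i,\, d\in S_j\\ xd=a^2}}\frac{1}{|S_i|\,|S_j|}
\le \sum_k \frac{1}{|S_k|}\sum_{a\in S_k}\frac{\tau(a^2)}{n^{2\alpha}}
\le \frac{n^{\beta+o(1)}}{n^{2\alpha}} .
\]
Your final attempt loses exactly this. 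You counted solutions separately for each pair of indices $(k,j)$ and then summed over $j$, which discards the fact that for fixed $a$ the divisor bound already covers all $j$ at once. Only the sum over $k$ should cost a factor $n^{\beta}$. Separately, the $|S_j|$ in your $\min(|S_k|,|S_j|)$ is unjustified: for fixed $d$, the number of $a$ with $d\mid a^2$ is not controlled by a divisor function.
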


\begin{proof}
Writing $\tau(n)$ for the number of divisors of $n$, let $\varepsilon := \frac{2\alpha - \beta}{2} > 0$ and assume that $n$ is large enough so that both $n^{\varepsilon} \ge 48$ and $\tau(m) < n^{\varepsilon}$ hold for all $m \le n^2$.\footnote{Using the explicit results from~\cite{NicolasRobin1983}, one can check that any $n \ge e^{e^{3\varepsilon^{-1}}}$ is admissible.} For every $i$, let $X_i$ denote the random variable where we choose an element from $S_i$ independently and uniformly at random. The goal is to prove that there is a positive probability that this gives a multiplicative Sidon set. 

For every $a \in S_i, b \in S_j, c \in S_k, d \in S_l$ with $ab = cd$ and $\{a, b\} \neq \{c, d\}$, let $E^{i,j,k,l}_{a,b,c,d}$ denote the bad event that $X_i = a, X_j = b, X_k = c, X_l = d$. For any valid permutation of the indices choose one representative, and write $\mathcal{E}$ as the collection of the representatives of all bad events. We then construct the graph $G$ with vertices the elements from $\mathcal{E}$, and where we draw an edge between $E^{i,j,k,l}_{a,b,c,d}$ and $E^{i',j',k',l'}_{a',b',c',d'}$ if $\{i,j,k,l\} \cap \{i',j',k',l'\} \neq \emptyset$. This gives a dependency graph, as elements from distinct sets in $\mathcal{S}$ are chosen independently. Now, with $f(E) := 2 \Pr(E)$ defined for any bad event $E \in \mathcal{E}$, by Lemma \ref{lll} it then suffices to prove that for all $E$ the inequalities $f(E) < 1$ and \eqref{LLLeq} hold. 

For the first inequality, we note that any non-trivial relation $ab = cd$ contains at least three distinct variables, which implies $$f(E) = 2 \Pr(E) \le \frac{2}{n^{3\alpha}} < \frac{2}{n^{\varepsilon}} < 1.$$ As for inequality \eqref{LLLeq}, let us fix $i$ and define $P_i$ to be the sum of the probabilities of all bad events involved with $X_i$. We then note that the bad events involving an $a \in S_i$ come in three distinct types: either $a^2 = bc$, or $ab = c^2$, or $ab = cd$, where $a,b,c$ and possibly $d$ are all from different subsets. The contribution to $P_i$ of the bad events of the latter type is at most
\begin{align*}
&\sum_{a\in S_i} \Pr(X_i = a) \sum_j \sum_{b\in S_j} \Pr(X_j = b) \sum_{k,l} \sum_{\substack{c \in S_k, d \in S_l \\ ab=cd}} \Pr(X_k = c)\Pr(X_l = d) \\
&\le \sum_{a\in S_i} \Pr(X_i = a) \sum_j \sum_{b\in S_j} \Pr(X_j = b) \frac{\tau(ab)}{n^{2\alpha}} \\
&< \frac{n^{\varepsilon}}{n^{2\alpha}} \sum_{a\in S_i} \Pr(X_i = a) \sum_j \sum_{b\in S_j} \Pr(X_j = b) \\
&= \frac{|\mathcal{S}|}{n^{2\alpha - \varepsilon}}.
\end{align*}

Similarly, the contribution of bad events of the first type is bounded by
\begin{align*}
&\sum_{a\in S_i} \Pr(X_i = a) \sum_{j,k} \sum_{\substack{b \in S_j, c \in S_k \\ a^2=bc}} \Pr(X_j = b)\Pr(X_k = c) \\
&\le \sum_{a\in S_i} \Pr(X_i = a) \frac{\tau(a^2)}{n^{2\alpha}} \\
&< \frac{1}{n^{2\alpha - \varepsilon}},
\end{align*}

while events of the second type contribute at most
\begin{align*}
&\sum_k \sum_{c\in S_k} \Pr(X_k = c) \sum_j \sum_{\substack{a \in S_i, b \in S_j \\ ab=c^2}} \Pr(X_i = a)\Pr(X_j = b) \\
&\le \sum_k \sum_{c\in S_k} \Pr(X_k = c) \frac{\tau(c^2)}{n^{2\alpha}} \\
&< \frac{|\mathcal{S}|}{n^{2\alpha - \varepsilon}}.
\end{align*}

Combining these estimates we see that for every fixed $i$, $$P_i < \frac{2|\mathcal{S}| + 1}{n^{2\alpha - \varepsilon}} \le \frac{3n^{\beta}}{n^{2\alpha - \varepsilon}} = \frac{3}{n^{\varepsilon}} \le \frac{1}{16}.$$ Now using the above, for every bad event $E = E^{i,j,k,l}_{a,b,c,d}$ we get
\begin{align*}
&f(E) \prod_{E' \in \Gamma_G(E)} \big(1 - f(E')\big) \\
&\ge f(E)\big(1 - \sum_{E' \in \Gamma_G(E)} f(E')\big) \\
&\ge f(E)\big(1 - 2(P_i + P_j + P_k + P_l)\big) \\
&\ge \frac{1}{2}f(E) \\
&= \Pr(E),
\end{align*}

finishing the proof.
\end{proof}

For an immediate application of Lemma \ref{randomMS}, let $\varepsilon \in \big(0, \frac{2}{3}\big)$ be arbitrary, choose $\alpha := \frac{1}{3} + \varepsilon$, $\beta := \frac{2}{3} - \varepsilon$ and, with $H := \left \lceil n^{\alpha} \right \rceil$, define $S_i := \big((i-1) H, i H]$ for $i = 1, 2, \ldots, \left \lfloor \frac{n}{H} \right \rfloor$. One can then verify that the conditions of Lemma \ref{randomMS} are met, so that we may deduce the existence of a multiplicative Sidon set that contains an element from each $S_i$ if $n$ is large enough. In particular, such a multiplicative Sidon set intersects every interval of length $2H$, giving the following corollary.

\begin{corollary} \label{firstbound}
For all $\varepsilon > 0$ we have $g(n) \ll_{\varepsilon} n^{1/3 + \varepsilon}$.
\end{corollary}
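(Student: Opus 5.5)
The corollary follows from Lemma \ref{randomMS} with the interval partition described just before the statement. Fix $\varepsilon \in (0, \frac{2}{3})$; this loses nothing, since a bound for small $\varepsilon$ implies the bound for larger ones. Set $\alpha := \frac{1}{3} + \varepsilon$ and $\beta := \frac{2}{3} - \varepsilon$. Then $\beta > 0$ and $2\alpha - \beta = 3\varepsilon > 0$, so the hypothesis $0 < \beta < 2\alpha$ holds.

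Let $H := \lceil n^{\alpha} \rceil$ and define $S_i := ((i-1)H, iH] \cap \mathbb{Z}$ for $1 \le i \le \lfloor n/H \rfloor$. These are disjoint subsets of $[n]$, each of size $H \ge n^{\alpha}$. Their number is
\[
\lfloor n/H \rfloor \le n^{1-\alpha} = n^{2/3 - \varepsilon} = n^{\beta}.
\]
So for $n \ge n_0(\varepsilon)$, Lemma \ref{randomMS} gives a multiplicative Sidon set $C$ with $C \cap S_i \neq \emptyset$ for every $i$.

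Next we check the gap property. Let $[x, x+2H] \subseteq [1,n]$. If it contains some full block $S_i$, it meets $C$. Now take the smallest $i$ with $iH \ge x$, so $(i-1)H < x$. Then $S_{i+1} \subseteq (x, x+2H]$, since $(i+1)H < x + 2H$. This requires $S_{i+1}$ to exist, i.e.\ $(i+1)H \le \lfloor n/H \rfloor H$. If instead the interval lies near the right end, it still contains a block. Since $x + 2H \le n$ and the blocks cover $[1, \lfloor n/H\rfloor H] \supseteq [1, n-H]$, the interval $[x, x+2H]$ contains the length-$H$ window $[x+H/2, \ldots]$ inside the covered region. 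A complete block therefore lies in it, because any interval of length $2H$ inside $[1, \lfloor n/H\rfloor H]$ contains a full block. This endpoint bookkeeping is the only mildly fiddly step. It can be absorbed by allowing a constant $3H$ instead of $2H$ if needed, which is harmless since only $\ll_\varepsilon$ is claimed.

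Hence $g(n) \le 3H \ll n^{1/3 + \varepsilon}$ for $n \ge n_0(\varepsilon)$. For the finitely many $n < n_0(\varepsilon)$, the trivial bound $g(n) \le n$ can be absorbed into the implied constant. Since $\varepsilon$ was arbitrary, $g(n) \ll_\varepsilon n^{1/3+\varepsilon}$.
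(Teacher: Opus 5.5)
Your proposal is correct and follows the paper's argument exactly. You use the same choice $\alpha=\frac13+\varepsilon$, $\beta=\frac23-\varepsilon$ and the same blocks of length $H=\lceil n^{\alpha}\rceil$, apply Lemma \ref{randomMS} to get one element per block, and observe that this leaves no gap longer than $2H$. Your separate right-end case, whose justification is somewhat circular, is actually vacuous: if $x+2H\le n$ and $i\ge\lfloor n/H\rfloor$, then $x>(\lfloor n/H\rfloor-1)H$ gives $x+2H>(\lfloor n/H\rfloor+1)H>n$, a contradiction. So $S_{i+1}$ always exists, and the bound $2H$ holds without falling back to $3H$.
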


\section{Bounding gaps via the scarcity of prime-poor intervals} \label{sec:many_primes_imply_small_gaps}
We introduce the following notion that quantifies how often short intervals contain very few primes.

\begin{definition} \label{lebesgue}
For a positive real number $\alpha$, define $\lambda(\alpha)$ to be the smallest non-negative real such that, for all $\varepsilon > 0$ and all large enough $n$, the Lebesgue measure of the set of $x \in [0, n]$ for which the interval $(x, x+x^{\alpha}]$ contains fewer than $5^{1/\alpha}$ primes, is smaller than $n^{\lambda(\alpha) + \varepsilon}$.
\end{definition}

We certainly expect $\lambda(\alpha)$ to be $0$ for all $\alpha > 0$. Indeed, it is even conjectured that the prime number theorem holds in such intervals, whereas we only require a (sufficiently large) finite number of primes. However, it is currently only known that $\lambda(\alpha) = 0$ holds for $\alpha > 0.52$, thanks to Li~\cite{Li2025}. Fortunately, we do not quite need $\lambda(\alpha) = 0$ for our purposes.

\begin{theorem} \label{main}
If $\lambda(\alpha) < 3\alpha$ for some $\alpha > 0$, then $g(n) \ll_\alpha n^{\alpha}$.
\end{theorem}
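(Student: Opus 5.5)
The plan is to run the argument of Lemma~\ref{randomMS}, but to use primes wherever possible and random composite choices only on the few prime-poor stretches. Fix $\alpha$ with $\lambda(\alpha)<3\alpha$, choose $\varepsilon>0$ with $\lambda(\alpha)+2\varepsilon<3\alpha$, and set $H:=\lceil n^{\alpha}\rceil$. Cut $(0,n]$ into consecutive blocks $B_i=((i-1)H,iH]$. For each block let $y_i=(i-1)H$. Since $y_i^{\alpha}\le H$, the interval $(y_i,y_i+y_i^{\alpha}]$ lies inside $B_i$.

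\textbf{Good and bad blocks.} Call $B_i$ \emph{good} if $B_i$ contains at least $5^{1/\alpha}$ primes, and \emph{bad} otherwise. By Definition~\ref{lebesgue}, the set of $x$ for which $(x,x+x^{\alpha}]$ is prime-poor has measure at most $n^{\lambda(\alpha)+\varepsilon}$. If $B_i$ is bad, then every $x$ in roughly the first half of $B_i$ is prime-poor, since $(x,x+x^{\alpha}]\subseteq B_i$ up to the negligible change in $x^{\alpha}$. Hence the number $K$ of bad blocks satisfies
\[
K\ll n^{\lambda(\alpha)+\varepsilon-\alpha}.
\]
The block length is $H$, so it suffices to build a multiplicative Sidon set meeting every block; this gives gaps at most $2H\ll n^{\alpha}$. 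The finitely many small $n$ are absorbed into the implied constant.

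\textbf{Construction.} For a good block, let $S_i$ be its set of primes. For a bad block, let $S_i$ be its set of composite integers, which has size at least $H/2$ for large $n$. Pick $X_i\in S_i$ independently and uniformly. The bad events and the dependency graph are exactly as in Lemma~\ref{randomMS}, with $f(E)=2\Pr(E)$ again. The whole task is to bound, for each fixed $i$, the total probability $P_i$ of bad events involving $X_i$ by a small constant, say $1/16$.

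\textbf{Classifying the bad events.} Compare prime factors with multiplicity via $\Omega$.
\begin{itemize}
\item A relation $ab=cd$ among primes only is trivial.
\item The relations $pq=cd$ and $p^2=cd$, with $c,d$ composite, are impossible by comparing $\Omega$.
\item The same comparison rules out every other configuration with three or four primes.
\end{itemize}
So every bad event involves at least two composite variables, each taken with probability at most $2n^{-\alpha}$. For a relation among composites alone, or one with a single prime factor such as $pc=de$, two of the variables are determined up to $\tau(\cdot)<n^{\varepsilon}$ choices once the others are fixed, exactly as in Lemma~\ref{randomMS}. The only free summation over blocks then runs over the $K$ bad blocks rather than all $n^{1-\alpha}$ blocks. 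This gives contributions of size
\[
\frac{K\,n^{\varepsilon}}{n^{2\alpha}}\ll n^{\lambda(\alpha)+2\varepsilon-3\alpha}=o(1),
\]
which is precisely where the hypothesis $\lambda(\alpha)<3\alpha$ enters.

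\textbf{Main obstacle.} The hard terms are relations with two primes and two composites, $pq=cd$ with $p\ne q$, which is in fact the only possible shape. Here $c=pm$ and $d=qm'$ after reordering, and the free summation is over good blocks, of which there can be about $n^{1-\alpha}$. I would handle these by conditioning on the composite variables. Given $c$, the prime $p$ must be one of the at most $\omega(c)\ll\log n$ prime factors of $c$, and each prime is selected with probability at most $5^{-1/\alpha}$. Given $c$ and $d$, the pair $(p,q)$ is therefore determined up to $O(\tau(c)\tau(d))$ choices. Summing over the at most $K^2$ pairs of bad blocks with weight $n^{-2\alpha}$ keeps the total below $K\cdot n^{\varepsilon-2\alpha}$ when $X_i$ is one of the composites. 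When $X_i$ is a prime, I would bound the sum over $(c,d)$ similarly, with $c$ restricted to multiples of the fixed prime $p$ inside bad blocks.

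I expect the constant $5^{1/\alpha}$ to be exactly what makes the bookkeeping close when $X_i=p$. A prime $p\approx x$ can divide composites in only about $n/x$ blocks, and with $x\gg n^{\alpha}$ the per-prime selection probability $\le 5^{-1/\alpha}$ should make the sum over the relevant multiples at most a constant less than $1/16$. Once all $P_i\le 1/16$, the final chain of inequalities in the proof of Lemma~\ref{randomMS} applies verbatim. Lemma~\ref{lll} then gives a multiplicative Sidon set meeting every block, and hence $g(n)\ll_{\alpha}n^{\alpha}$.
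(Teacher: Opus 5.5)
There is a genuine gap at what you call the main obstacle, and it cannot be closed within your set-up. (Your case list also contradicts itself: you rule out $pq=cd$ by $\Omega$ and then call it the only surviving shape. The surviving two-prime relations are $pc=qd$ with $c=qm$, $d=pm$.)

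\textbf{Why the local lemma fails.} Write $w(x)$ for the probability that $x$ is the element selected from its block. The hypothesis only tells you that a good block has at least $5^{1/\alpha}$ primes, so $w(p)$ is only bounded by the constant $5^{-1/\alpha}$. Primes therefore give no $n^{-\alpha}$ saving; such a saving comes only from a composite variable that is determined by the others. Once $X_i=c$ is fixed, only $d$ can play that role. So your bound $K n^{\varepsilon-2\alpha}$, which uses two such savings, is not justified.

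Summing honestly over $q\mid c$, and over the primes $p$ with $pm$ in a bad block, gives the true size. For each bad block these $p$ lie in an interval of length at most $H/2$, so their weights add up to at most $2$. You get
\[
\sum_{c\in S_i}w(c)\sum_{q\mid c}w(q)\sum_{p\,:\,pm\text{ in a bad block}}w(p)\,w(pm)\ \ll\ \frac{K\log n}{H}.
\]
This is of size up to $n^{\lambda(\alpha)-2\alpha+\varepsilon}$, up to logarithms, and the case $X_i=p$ gives the same order (sum $c$ freely over the bad blocks; then $q\mid c$ and $d=pc/q$ are determined). Nothing in your argument makes this smaller, so $P_i\le 1/16$ would require $\lambda(\alpha)<2\alpha$. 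That weaker statement is useless: with Stadlmann's bound it only gives exponents above $0.41$, worse than Corollary~\ref{firstbound}. A fixed constant like $5^{1/\alpha}$ cannot make a polynomially large sum small, so your last paragraph does not close the argument.

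A smaller point: with $H=\lceil n^\alpha\rceil$, a bad block near $n$ need not contribute measure $\gg n^\alpha$, since there $x^\alpha$ is almost $H$. Take $H=2\lceil n^\alpha\rceil$ instead.

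\textbf{The missing idea.} Make mixed relations impossible instead of estimating them. The paper first fixes the primes deterministically. From each good block it takes one of the first $L=\lceil 5^{1/\alpha}\rceil$ primes exceeding $H$, chosen so that every bad block keeps more than $H/2$ integers divisible by no chosen prime. This is a counting argument:
\begin{itemize}
\item an integer in a bad block has at most $1/\alpha$ prime factors above $H$, so at most $H/\alpha$ primes touch a given bad block;
\item the choices that ruin some bad block therefore number at most $n2^{H/\alpha}L^{N-H/2}<L^N$.
\end{itemize}
This, and not the local lemma, is where $5^{1/\alpha}>4^{1/\alpha}$ is used.

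Lemma~\ref{randomMS} is then applied only to the surviving integers of the at most $n^{\lambda(\alpha)+\varepsilon-\alpha}<n^{2\alpha}$ bad blocks. That is precisely where $\lambda(\alpha)<3\alpha$ enters. In the union of the chosen primes with the resulting Sidon set $C$, a chosen prime divides no element of $C$. Hence in any relation $ab=cd$ it must occur on both sides and cancel, and no mixed term ever needs to be bounded.
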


\begin{proof}
Set $\varepsilon := \alpha - \frac{\lambda(\alpha)}{3} > 0$, define $\beta := 2\alpha - \varepsilon$, and assume that $n$ is sufficiently large. With $H := 2\lceil n^{\alpha} \rceil$, we consider the intervals $\big((i-1)H, iH]$ for positive integers $i \le \frac{n}{H}$. Apart from the first interval corresponding to $i = 1$, every other interval then either contains at least $L := \left \lceil 5^{1/\alpha} \right \rceil$ primes $p$ with $H < p \le n$, or it contributes measure at least $n^{\alpha}$ to the total measure of intervals counted in Definition \ref{lebesgue}, which is bounded by $n^{\lambda(\alpha) + \varepsilon}$. Hence, the number of exceptional intervals which have fewer than $L$ primes $p$ with $H < p \le n$ is at most $n^{\lambda(\alpha) + \varepsilon - \alpha} + 1 \le n^{\beta}$. Let us call these exceptional intervals \emph{bad} and the other intervals \emph{good}, and let $N$ be the number of good intervals. 

The goal is to show that it is possible to choose a single prime from the first $L$ primes in every good interval, in such a way that every bad interval contains more than $\frac{H}{2}$ integers which are not divisible by any of the chosen primes. If we choose the primes in such a way that a bad interval $I$ contains at least $\frac{H}{2}$ integers divisible by one or more of the chosen primes, we say that this choice \emph{ruins} $I$. Since every prime we consider is larger than $H$, it can divide at most one integer in every interval $I$. We may therefore assume $N \ge \frac{H}{2}$, as otherwise it would not even be possible to ruin any bad interval. 

Now, every integer in a bad interval is divisible by at most $\frac{1}{\alpha}$ distinct primes $p > H > n^{\alpha}$, as otherwise the product of these primes would be too large. So for a bad interval $I$, there are at most $\frac{H}{\alpha}$ candidate primes that divide an integer in $I$. In particular, there are certainly no more than $2^{\frac{H}{\alpha}}$ ways to choose $\frac{H}{2}$ primes that would ruin any fixed $I$. The primes from the remaining $N - \frac{H}{2}$ good intervals can then still be chosen in $L$ ways each. Since the number of bad intervals is bounded by $n$, the total number of ways to ruin any bad interval is at most $$n 2^{\frac{H}{\alpha}} L^{N - \frac{H}{2}} \le n \left(\frac{2}{\sqrt{5}} \right)^{H/\alpha} L^N < L^N.$$ As the total number of ways to choose primes from good intervals is exactly $L^N$, there exists a choice that does not ruin any bad interval. Fix such a choice and let $B$ be the set of primes chosen. 

For a bad interval $I$, let $S \subseteq I$ be the subset of integers not divisible by any prime from $B$, and let $\mathcal{S}$ be the collection of all sets $S$. By the above, we then have $|S| > \frac{H}{2} \ge n^{\alpha}$, while $|\mathcal{S}| \le n^{\beta}$ with $\beta = 2\alpha - \varepsilon < 2\alpha$. We may therefore apply Lemma \ref{randomMS}, and deduce the existence of a multiplicative Sidon set $C$ that consists of one element from every bad interval. We now claim that $A := B \cup C$ is a multiplicative Sidon set that intersects every interval of length $2H \ll n^{\alpha}$. The latter part is straightforward, as $A$ contains exactly one element from every interval, so that the gap between consecutive elements (as well as the initial and final gap) is at most $2H$. The multiplicative Sidon property is equally easy to verify, as any non-trivial equality of the form $ab = cd$ with $a,b,c,d \in A$ must contain elements from $B$ by the multiplicative Sidon property of $C$. But as the primes are all distinct and do not divide any elements in $C$ by construction, such a prime must then occur on both sides of the equality, from which we deduce $\{a, b\} = \{c, d\}$.
\end{proof}

\section{Applying prime results from the literature}
By Corollary~\ref{firstbound}, a random construction gives $g(n) \ll_{\varepsilon} n^{1/3 + \varepsilon}$. This corollary can be viewed as a special case of Theorem \ref{main}, where the set of primes $B$ is empty and one only uses the trivial bound $\lambda(\alpha) \le 1$. There are, however, various results from the literature that imply that $\lambda(\alpha) < 3\alpha$ holds, even for some $\alpha \le \frac{1}{3}$. 

For an example that does not quite work but still improves on the results in~\cite{DMT2026}, if $p_1, p_2, \ldots$ denotes the sequence of primes, then J\"arviniemi~\cite{Jarviniemi2022} proved the upper bound $$\sum_{\substack{p_m \in [n, 2n] \\ p_{m+1} - p_m\ge n^{0.45}}} (p_{m+1} - p_m) \ll_{\varepsilon} n^{0.63 + \varepsilon},$$ which gives $\lambda(0.45) \le 0.63 < 3 \cdot 0.45$, implying $g(n) \ll n^{0.45}$. 

Moving from a sum of differences to a sum of squared differences, Stadlmann~\cite{Stadlmann2022} showed that $$\sum_{p_m \le n} (p_{m+1} - p_m)^2 \ll_{\varepsilon} n^{1.23 + \varepsilon}.$$ One can check that from this estimate it follows that $\lambda(\alpha) \le 1.23 - \alpha$, so that $\lambda(\alpha) < 3\alpha$ for any $\alpha > \frac{1.23}{4} = 0.3075$. That is, $g(n) \ll_{\varepsilon} n^{0.3075 + \varepsilon}$, genuinely improving on Corollary~\ref{firstbound}. Furthermore, as the Lindel\"of Hypothesis implies the same bound on the sum of the squared differences but with $1.23$ replaced by $1$ (see~\cite{Yu1996}), conditional on the Lindel\"of Hypothesis this gives us $g(n) \ll_{\varepsilon} n^{1/4 + \varepsilon}$. 

Perhaps not surprisingly, this very same conditional result also follows via a slightly different route. Using notation which is essentially equivalent to \cite[Definition 1.1]{GafniTao2025}, we introduce the following strengthened version of $\lambda(\alpha)$.

\begin{definition} \label{lebesguetwo}
For a positive real number $\alpha$, define $\mu(\alpha)$ to be the smallest non-negative real such that, for all $\varepsilon > 0$ and all large enough $n$, the Lebesgue measure of the set of $x \in [0, n]$ for which the prime number theorem fails to hold in the interval $(x, x+x^{\alpha}]$, is smaller than $n^{\mu(\alpha) + \varepsilon}$.
\end{definition}

It then follows from the Lindel\"of Hypothesis (see \cite[Lemma 4]{Bazzanella2009}, or \cite[Theorem 2 (ii)]{BazzanellaPerelli2000} if we instead assume the Riemann Hypothesis) that $\mu(\alpha) \le 1 - \alpha$ for all $\alpha \in [0, 1]$. In particular, taking $\alpha = \frac{1}{4} + \varepsilon$ for arbitrarily small $\varepsilon$ once again gives $$\lambda(\alpha) \le \mu(\alpha) \le \frac{3}{4} - \varepsilon < 3 \alpha,$$ which also implies the conditional $g(n) \ll_{\varepsilon} n^{1/4 + \varepsilon}$ by Theorem~\ref{main}. 

As for the best unconditional bound, we can use results of Gafni and Tao \cite{GafniTao2025} to slightly improve the $0.3075$ exponent that we deduced from \cite{Stadlmann2022}.

\begin{theorem}
For all $\varepsilon > 0$ we have $g(n) \ll_{\varepsilon} n^{10/33 + \varepsilon}$.
\end{theorem}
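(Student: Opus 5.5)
The plan is to apply Theorem~\ref{main} with $\alpha = \frac{10}{33} + \varepsilon$, for $\varepsilon > 0$ arbitrarily small. It then suffices to show $\lambda(\alpha) < 3\alpha$ for every such $\alpha$. Since $\lambda(\alpha) \le \mu(\alpha)$ (if the prime number theorem holds in $(x, x+x^\alpha]$, then for large $x$ that interval contains about $x^{\alpha}/\log x \ge 5^{1/\alpha}$ primes), the task reduces to bounding $\mu(\alpha)$ via Gafni and Tao~\cite{GafniTao2025}. Because Definition~\ref{lebesguetwo} is essentially their Definition~1.1, their exceptional-set exponent for the prime number theorem in short intervals $(x, x+x^{\theta}]$ translates directly into an upper bound for $\mu(\theta)$.

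The first step is to extract from \cite{GafniTao2025} the explicit piecewise bound on this exceptional exponent in the range $\theta \approx 0.3$. I expect it to come from zero-density estimates: Guth--Maynard in the relevant range of $\sigma$, combined with Huxley/Ingham-type and large-value estimates. The second step is to compare this bound with the line $3\theta$ and find the crossing point. The target is that at $\theta = \frac{10}{33}$ their bound equals exactly $\frac{10}{11} = 3 \cdot \frac{10}{33}$, and that for $\theta$ slightly larger it lies strictly below $3\theta$. This should follow because the bound is continuous and non-increasing in $\theta$, while $3\theta$ is strictly increasing.

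For instance, I would check whether a linear piece of the form $\mu(\theta) \le c - d\theta$ holds on an interval containing $\frac{10}{33}$ with $c/(3+d) = \frac{10}{33}$. A plausible guess is a piece such as $\mu(\theta) \le \frac{30}{11}\cdot\frac{1}{3}(1-\theta)\cdot\ldots$; the exact form must be read off their theorem. If their result only handles dyadic ranges $x \in [X, 2X]$, I would sum over the $O(\log n)$ dyadic blocks up to $n$; this costs only a factor absorbed into $n^{\varepsilon}$.

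With $\lambda(\alpha) \le \mu(\alpha) < 3\alpha$ for $\alpha = \frac{10}{33}+\varepsilon$, Theorem~\ref{main} gives $g(n) \ll_\alpha n^{\alpha} = n^{10/33+\varepsilon}$, which is the claim.

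The main obstacle is the translation step. Gafni--Tao's exceptional sets may be stated with a different normalization: with a specific interval-length convention such as $(x, x+x^\theta]$ versus $(x-h,x]$ with $h = X^\theta$, and with PNT holding in an asymptotic form $\sim h/\log x$ rather than a quantitative error. I must verify that their measure bound, being uniform over dyadic $X$, really covers all of $[0,n]$, and that the numerics of their optimal piece produce exactly the threshold $\frac{10}{33}$. I would first locate the statement in \cite{GafniTao2025} giving the exponent, and solve $\mu(\theta) = 3\theta$ there.
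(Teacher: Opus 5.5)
Your reduction is the same as the paper's: apply Theorem~\ref{main}, use $\lambda(\alpha)\le\mu(\alpha)$, and bound $\mu$ through Gafni--Tao. The gap is that you never carry out the one step with real content, namely computing what Gafni--Tao actually give near $\theta=\frac{10}{33}$. Instead you assume the outcome (``the target is that at $\theta=\frac{10}{33}$ their bound equals exactly $\frac{10}{11}$''), and your candidate formula trails off into ``$\ldots$''. Nothing in your argument singles out $\frac{10}{33}$ over any other exponent. As written, you have shown only that the theorem would follow \emph{if} Gafni--Tao's exponent lies below $3\alpha$ for every $\alpha>\frac{10}{33}$, and that is the entire claim. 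The normalization issues you flag are not the real obstacle, since Definition~\ref{lebesguetwo} is set up to match their Definition 1.1.

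The missing idea is that their Theorem 1.2 does not hand you a piecewise-linear function of $\theta$ to read off. It bounds the exceptional exponent by a supremum over $\sigma$ involving the zero-density exponent $A(\sigma)$, which is tabulated in their Table 1. At $\theta=\frac{10}{33}$, so $1-\theta=\frac{23}{33}$, you must show
\[
\sup\Bigl\{\tfrac{23}{33}(1-\sigma)A(\sigma)+2\sigma-1 :\ \sigma\in[0,1),\ A(\sigma)\ge\tfrac{33}{23}\Bigr\}\le\tfrac{10}{11}.
\]
The table gives $A(\sigma)<\frac{33}{23}$ for $\sigma>\frac{10}{11}$, so only the first $14$ rows matter. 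A row-by-row check then gives the bound, with the maximum at the endpoint $\sigma=\frac{10}{11}$. There the constraint is tight and the value is $\frac{23}{33}\cdot\frac{1}{11}\cdot\frac{33}{23}+\frac{9}{11}=\frac{10}{11}=3\cdot\frac{10}{33}$, which is where the exponent comes from.

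For $\alpha>\frac{10}{33}$ the bound only decreases: the factor $1-\alpha$ shrinks and the constraint $A(\sigma)\ge\frac{1}{1-\alpha}$ tightens. Hence $\mu(\alpha)\le\frac{10}{11}<3\alpha$. Monotonicity is all that is needed; your continuity assumption is unnecessary.

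Your linear-piece ansatz is in the right spirit. For fixed $\sigma$ the expression is linear in $\theta$, and the $\sigma=\frac{10}{11}$ piece, with $d=(1-\sigma)A(\sigma)=\frac{3}{23}$ and $c=d+2\sigma-1$, satisfies $c/(3+d)=\frac{10}{33}$. Even so, only the table verification shows that no other $\sigma$ does worse.
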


\begin{proof}
By Theorem \ref{main} and the trivial inequality $\lambda(\alpha) \le \mu(\alpha)$, it suffices to show that for all $\alpha > \frac{10}{33}$ we have $\mu(\alpha) \le \frac{10}{11}$. Unpacking and applying the definitions used in \cite[Theorem 1.2]{GafniTao2025} shows that we need to look at the supremum of $$\frac{23}{33}(1 - \sigma)A(\sigma) + 2\sigma - 1$$ over all $\sigma \in [0, 1)$ for which $A(\sigma) \ge \frac{33}{23}$. By using \cite[Table 1]{GafniTao2025}, one can verify that $A(\sigma) < \frac{33}{23}$ holds whenever $\sigma > \frac{10}{11}$, so that we may restrict to $\sigma \le \frac{10}{11}$, i.e. the first $14$ rows of \cite[Table 1]{GafniTao2025}. Manual verification of these $14$ ranges does indeed give $$\frac{23}{33}(1 - \sigma)A(\sigma) + 2\sigma - 1 \le \frac{10}{11}$$ for all $\sigma \le \frac{10}{11}$ (where the maximum of the function on the left-hand side is reached at the boundary point $\sigma = \frac{10}{11}$), as required.
\end{proof}

\end{document}